\numberwithin{equation}{section}
\theoremstyle{definition}
\newtheorem{theorem}[equation]{Theorem}
\newtheorem{lemma}[equation]{Lemma}
\newtheorem{proposition}[equation]{Proposition}
\newtheorem{remark}[equation]{Remark}
\newcommand{\sle}{\widehat{\mathfrak{sl}_e}}
\newcommand{\slinf}{\mathfrak{sl}_\infty}
\newcommand{\ct}{\mathrm{ct}}
\newcommand{\ul}{\underline}
\newcommand{\cA}{\mathcal{A}}
\newcommand{\cF}{\mathcal{F}}
\newcommand{\cO}{\mathcal{O}}
\newcommand{\N}{\mathbb{N}}
\title[Finite-dimensional unitary representations of type $B$ RCA]{Classification of the finite-dimensional unitary representations of type $B$ rational Cherednik algebras}
\author{Emily Norton}
\address[E.N.]{Max Planck Institute for Mathematics, Vivatsgasse 7, 53111 Bonn, Germany.}
\email{enorton@mpim-bonn.mpg.de}
\begin{document}

\begin{abstract}
We compare crystal combinatorics of the level $2$ Fock space with the classification of unitary representations of type $B$ rational Cherednik algebras to show that any finite-dimensional unitary irreducible representation of such an algebra is labeled by a bipartition consisting of a rectangular partition in one component and the empty partition in the other component. This is a new proof of a result that can be deduced from theorems of Montarani and Etingof-Stoica.
\end{abstract}

\maketitle

\section{Statement of the result}
In this note we answer the following question about representations in the category $\cO$ of rational Cherednik algebras associated to the Weyl group $B_n$: which unitary representations of these algebras are also finite-dimensional? To give a precise combinatorial answer in terms of the labeling of irreducible representations by bipartitions of $n$, we use  a criterion by Etingof and Stoica in terms of the $c$-function \cite{EtingofStoica}, the classification of unitary representations by Griffeth \cite{Griffeth}, and the combinatorial description of finite-dimensional representations in terms of source vertices of crystals on a level $2$ Fock space due to Shan, Vasserot, Losev, Gerber, and the author \cite{Shan},\cite{ShanVasserot},\cite{Losev},\cite{Gerber1},\cite{Gerber2},\cite{GerberN}. Let $(e,\ul{s})$ be a parameter for a level $2$ Fock space, so $e\in\mathbb{Z}_{\geq 2}$ is the rank and $\ul{s}=(s_1,s_2)\in\mathbb{Z}^2$ is the charge. To this datum we can associate a rational Cherednik algebra $H_{e,\ul{s}}(B_n)$ and its category $\cO$ of representations, denoted $\cO_{e,\ul{s}}(n)$.
\begin{theorem}\label{unitary+fd} Let $\ul{\lambda}$ be a bipartition. \begin{enumerate}
\item There exists a parameter $(e,\ul{s})\in\mathbb{Z}_{\geq 2}\times \mathbb{Z}^2$ such that the irreducible representation $L(\ul{\lambda})\in\cO_{e,\ul{s}}(n)$ is both unitary and finite-dimensional
 if and only if $\ul{\lambda}=(\lambda,\emptyset)$ or $\ul{\lambda}=(\emptyset,\lambda)$ and $\lambda$ is a rectangle. 
 \item For a fixed parameter $(e,\ul{s})$ and a rectangle $\lambda$, the irreducible representation $L(\lambda,\emptyset)$ is both unitary and finite-dimensional if and only if $r-q=s-e$, where $r$ is the number of rows and $q$ the number of columns of $\lambda$, and $s=s_2-s_1$.
 \end{enumerate}
 A similar statement to (2) for $L(\emptyset,\lambda)$ holds, see Remark \ref{switch}.
\end{theorem}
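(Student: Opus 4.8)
The plan is to treat the two conditions defining the modules of interest — unitarity and finite-dimensionality — separately, reduce each to an explicit combinatorial condition on the bipartition $\ul{\lambda}$, and then intersect the two resulting loci. For unitarity I would use Griffeth's classification \cite{Griffeth} of the unitary $L(\ul{\lambda})$, while keeping track of the Etingof--Stoica $c$-function criterion \cite{EtingofStoica}, which ties unitarity and finite-dimensionality to the values of the $c$-function and will be needed to extract the exact numerical relation in part (2). For finite-dimensionality I would invoke the crystal on the level $2$ Fock space $\cF_{e,\ul{s}}$ coming from the categorical Kac--Moody and Heisenberg actions on $\cO_{e,\ul{s}}(n)$, together with the theorem of Shan, Shan--Vasserot, Losev, Gerber, and the author \cite{Shan,ShanVasserot,Losev,Gerber1,Gerber2,GerberN} that $L(\ul{\lambda})$ is finite-dimensional exactly when $\ul{\lambda}$ is a source vertex of this crystal.

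First I would make the source-vertex condition explicit by reading off, via the abacus and charged-content model of $\cF_{e,\ul{s}}$, which boxes (or vertical periods) are good and removable. My expectation is that this forces one of the two components to be empty: whenever both $\lambda^{(1)}$ and $\lambda^{(2)}$ are nonempty, the interleaving of the two runners on the abacus produces a good removable box, so $\ul{\lambda}$ fails to be a source. Having reduced to $\ul{\lambda}=(\lambda,\emptyset)$, the case $(\emptyset,\lambda)$ being entirely symmetric (Remark \ref{switch}), I would then show that the source condition on the single partition $\lambda$ forces $\lambda$ to be rectangular, since any concave corner of a non-rectangular $\lambda$ supplies a good removable box.

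Next I would intersect with unitarity. Writing a rectangle in the first component as an $r\times q$ array and setting $s=s_2-s_1$, the residues governing the crystal at the extremal corners of $\lambda$ are controlled by the content $q-r$ of the corner box and by $s$, while the exact (rather than mod $e$) relation is pinned by the genuine charged contents recorded in the $c$-function. Demanding simultaneously that $\ul{\lambda}$ lie on Griffeth's unitary locus and that it be a source vertex yields the numerical identity $r-q=s-e$ of part (2). Part (1) then follows in both directions: given any rectangle one solves $r-q=s-e$ for an admissible rank $e\geq 2$ and charge $\ul{s}$ (for instance $s_1=0$ and $s_2=r-q+e$ with $e$ large), producing a parameter that witnesses both properties, while conversely the two reductions above show that no bipartition outside the stated family can be simultaneously unitary and finite-dimensional.

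The step I expect to be the main obstacle is the first reduction — proving that being simultaneously unitary and a crystal source forces one component of $\ul{\lambda}$ to vanish. The crystal combinatorics on a level $2$ Fock space is genuinely two-component, and the signature rule entangles the two runners, so excluding all bipartitions with two nonempty components requires a careful analysis of good removable boxes on the abacus; it is precisely here that the unitarity hypothesis must be fed in to eliminate the configurations that the source condition alone does not kill. Once one component is empty, the remaining arguments reduce to mechanical corner-counting and a direct evaluation of the $c$-function, which I expect to be routine.
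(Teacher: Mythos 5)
Your overall plan---reduce each of the two properties to combinatorics on $\ul{\lambda}$ and intersect---is the right general idea, but the specific reductions you propose are not correct, and the gap is substantive rather than technical. You assert that the source-vertex condition alone ``forces one of the two components to be empty'' and, once one component is empty, ``forces $\lambda$ to be rectangular.'' Neither claim is true: the set of finite-dimensional $L(\ul{\lambda})$ (equivalently, of simultaneous source vertices for the $\sle$- and $\slinf$-crystals, by Theorem \ref{source vx}) is much larger than the family of rectangles concentrated in one component. There are totally $e$-periodic abaci with both components nonempty, and non-rectangular one-component source vertices as well. So the crystal condition cannot carry the weight you assign to it; it is the intersection with Griffeth's unitary locus that does the cutting, and your proposal never identifies the mechanism by which unitarity eliminates the extra configurations --- you only remark at the end that it ``must be fed in.''

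For comparison, the paper's proof splits differently. For $\ul{\lambda}=(\lambda,\emptyset)$ it does not use crystals at all: by Lemma \ref{lemma ES} a unitary module is finite-dimensional iff $c_{\ul{\lambda}}=0$, and substituting $c_{\ul{\lambda}}=0$ into the case analysis of \cite[Corollary 8.4]{Griffeth} shows the required equality $d+mc=\tfrac12$ has an integer solution $m$ exactly when $m=\tfrac2n\sum_b \ct(b)$ equals $\ct(b_2)=q-r$, i.e.\ when $\lambda$ is a rectangle; this also produces the relation $s=e-q+r$ of part (2). For two nonempty components, each case of \cite[Corollary 8.5]{Griffeth} is converted to Fock-space inequalities which, when added, force $\#\{\mathrm{columns}(\ul{\lambda})\}\leq e$; meanwhile total $e$-periodicity (needed for the $\sle$-source condition) forces at least $e$ nonzero columns (Lemma \ref{>=e col}), and the borderline case of exactly $e$ columns violates the $\slinf$-source criterion of \cite[Theorem 7.13]{GerberN} (Lemma \ref{e col}). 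If you want to salvage your outline, you would need to replace your first reduction with an argument of this type: a quantitative bound extracted from the unitarity inequalities that collides with a quantitative consequence of total $e$-periodicity.
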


It turns out that Theorem \ref{unitary+fd} can be deduced from a result of Montarani on wreath product algebras \cite{Montarani} together with the afore-mentioned criterion of Etingof-Stoica \cite{EtingofStoica}. Montarani deals with symplectic reflection algebras associated to wreath products, and the rational Cherednik algebra of $B_n$ is an example of such. Namely, Montarani proves that if $\ul{\lambda}$ extends to a representation of a rational Cherednik algebra at some parameters then $\ul{\lambda}$ is a rectangle concentrated in a single component, and she gives the formula for the parameters for which it extends depending on the height and width of the rectangle \cite{Montarani}, matching what we found in Theorem \ref{unitary+fd}. Etingof and Stoica prove that for the rational Cherednik algebra of a real reflection group, if $L(\ul{\lambda})$ is unitary then $L(\ul{\lambda})$ is in addition finite-dimensional if and only if $L(\ul{\lambda})=\ul{\lambda}$ \cite[Proposition 4.1]{EtingofStoica}. This implies Theorem \ref{unitary+fd}. Our proof is of independent interest as it uses different technology -- the combinatorial classification of unitary representations by Griffeth \cite{Griffeth} and the combinatorial classification of finite-dimensional representations arising from categorical affine Lie algebra actions \cite{ShanVasserot}. In particular, comparing the combinatorial classifications of unitary modules and finite-dimensional modules can in principle be done for $G(\ell,1,n)$ when $\ell>2$ (although the combinatorial classification in \cite{Griffeth} of unitary simples becomes very complicated for $\ell>2$). On the other hand, identifying unitary finite-dimensional simples by checking when $L(\ul{\lambda})=\ul{\lambda}$ is insufficient when $\ell>2$ as $G(\ell,1,n)$ is not a real reflection group and the criterion  \cite[Proposition 4.1]{EtingofStoica} need no longer hold for unitary finite-dimensional simple modules. So we may think of the case $\ell=2$ as a good test case providing evidence that all these different combinatorial gadgets are working compatibly and providing correct results, before we move on to the higher-level case.

\section{Background and notation}
A partition is a finite, non-increasing sequence of positive integers: $\lambda=(\lambda_1,\lambda_2,\dots,\lambda_r)$ such that $\lambda_1\geq \lambda_2\geq \dots\geq \lambda_r> 0$. Let $\mathcal{P}=\{\lambda\mid \lambda\hbox{ is a partition}\}\cup\{\emptyset\}$ be the set of all partitions including the empty partition $\emptyset$, which we consider as the unique partition of $0$. For a partition $\lambda$ we write $|\lambda|=\sum_{i=1}^r\lambda_i$ and we identify $\lambda$ with its Young diagram, the upper-left-justified array of $|\lambda|$ boxes in the plane with $\lambda_1$ boxes in the top row, $\lambda_2$ boxes in the second row from the top,..., $\lambda_r$ boxes in the $r$'th row from the top. A partition $\lambda$ is a rectangle if and only if $\lambda_1=\lambda_2=\dots=\lambda_r$. Denote the transpose partition of $\lambda$ by $\lambda^t$.

Let $B_n$ be the type $B$ Weyl group of rank $n$, also known as the hyperoctahedral group or the complex reflection group $G(2,1,n)$. Then $B_n\cong \left(\mathbb{Z}/2\mathbb{Z}\right)^n\rtimes S_n$ and the irreducible representations of $B_n$ over $\mathbb{C}$ are labeled by bipartitions of $n$ defined as $$\mathcal{P}^2(n):= \{\ul{\lambda}=(\lambda^1,\lambda^2)\mid \;|\lambda^1|+|\lambda^2|=n\}.$$
We call $\lambda^1$ and $\lambda^2$ the components of $\ul{\lambda}$. Set $$\mathcal{P}^2=\bigcup_{n\geq 0}\mathcal{P}^2(n)$$ and fix $e\in\N_{\geq 2}$ and $\ul{s}=(s_1,s_2)\in\mathbb{Z}^2$. The level $2$ Fock space $$\cF^2_{e,\ul{s}}=\bigoplus_{\ul{\lambda}\in\mathcal{P}^2}\mathbb{C}|\ul{\lambda},\ul{s}\rangle$$ of rank $e$ has basis given by ``charged bipartitions" $|\ul{\lambda},\ul{s}\rangle$. This means that we shift the contents of boxes in $\ul{\lambda}$ by the charge $\ul{s}$ so that the content of a box $b$ in row $x$, column $y$ of the Young diagram of $\lambda^j$ is $s_j+y-x=:\mathrm{ct}(b)$ for $j=1,2$. Let $s=s_2-s_1$. The Fock space is only defined up to a diagonal shift in $\ul{s}$, i.e. $\ul{s}$ and $\ul{s}+(a,a)$ yield the same Fock space for any $a\in\mathbb{Z}$, so unless otherwise noted we will always take $\ul{s}=(0,s)$.

The Fock space $\cF^2_{e,\ul{s}}$ comes endowed with the structure of an $\sle$-crystal as well as an exotic structure of an $\slinf$-crystal, and the combinatorial formulas for both actions depend on $\ul{s}$ and $e$. A crystal is a directed graph with at most one arrow between any two vertices. We call $|\ul{\lambda},\ul{s}\rangle$ a source vertex in a crystal if it has no incoming arrow. The vertices of both the $\sle$- and the $\slinf$-crystal are $\{|\ul{\lambda},\ul{s}\rangle\mid\ul{\lambda}\in\mathcal{P}^2\}$. 
These crystals come from a realization of $\cF^2_{e,\ul{s}}$ as the Grothendieck group of $$\cO_{e,\ul{s}}:=\bigoplus_{n\geq 0}\cO_{e,\ul{s}}(n)$$ where $\cO_{e,\ul{s}}(n)$ is the category $\cO$ of the rational Cherednik algebra of $B_n$ with parameters $(c,d)$ determined by the Fock space parameter $(e,\ul{s})$. The $\sle$-crystal on the Fock space arises via a categorical action of $\sle$ on $\cO_{e,\ul{s}}$; the Chevalley generators $f_i$ and $e_i$ act by $i$-induction and $i$-restriction functors, direct summands of the parabolic induction and restriction functors between $\cO_{e,\ul{s}}(n)$ and $\cO_{e,\ul{s}}(n+1)$ \cite{Shan}. Furthermore, there is a Heisenberg algebra action on $\cO_{e,\ul{s}}$ relevant for describing branching of irreducible representations with respect to 
categories $\cO$ attached to
 parabolic subgroups 
$B_m\times S_e^{\times k}$ \cite{ShanVasserot}. The categorical Heisenberg action gives rise to an $\slinf$-crystal structure on $\cF^2_{e,\ul{s}}$ whose arrows add $e$ boxes at a time to a bipartition.

\begin{theorem}\cite{ShanVasserot}\label{source vx} The irreducible representation $L(\ul{\lambda})\in \cO_{e,\ul{s}}(n)$ is finite-dimensional if and only if $\ul{\lambda}$ is a source vertex for both the $\sle$-crystal and the $\slinf$-crystal on $\cF^2_{e,\ul{s}}$.
\end{theorem}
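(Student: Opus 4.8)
The plan is to recast finite-dimensionality as the vanishing of a support variety and then to read that support off from the two categorical actions via the Bezrukavnikov--Etingof parabolic restriction functors. A simple object of $\cO_{e,\ul{s}}(n)$ is finite-dimensional exactly when $\Supp L(\ul\lambda)=\{0\}$, the support being taken inside $\mathfrak h$. The basic input is the support criterion: for a point-stabilizer (parabolic) subgroup $W'\subseteq B_n$, a point with stabilizer $W'$ lies in $\Supp L(\ul\lambda)$ if and only if $\mathrm{Res}_{W'}L(\ul\lambda)\neq 0$. Because restriction is transitive, nonvanishing of $\mathrm{Res}_{W''}$ for a small $W''$ forces nonvanishing of $\mathrm{Res}_{W'}$ for every larger parabolic $W'\supseteq W''$; hence $L(\ul\lambda)$ is finite-dimensional if and only if $\mathrm{Res}_{W'}L(\ul\lambda)=0$ for every maximal proper parabolic $W'$, and up to conjugacy these are the subgroups $S_k\times B_{n-k}$, $1\le k\le n$.

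First I would match the two crystals to the two distinguished restriction functors. For $k=1$ one has $\mathrm{Res}_{B_{n-1}}=\bigoplus_i {}^i\mathrm{Res}$, the sum of the $i$-restriction functors generating the categorical $\sle$-action, while for $k=e$ the relevant functor is the summand of $\mathrm{Res}_{B_{n-e}\times S_e}$ determined by the finite-dimensional simple of the rank-$e$ symmetric-group Cherednik algebra, which is a lowering operator of the Heisenberg action of \cite{ShanVasserot}. The bridge to combinatorics is the standard fact that crystals from categorical actions detect vanishing of functors on simple objects: for the relevant $\mathfrak{sl}_2$-type functor $E$ and a simple $M$, one has $EM=0$ if and only if the corresponding crystal operator annihilates the class $[M]$. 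Applying this to each $\sle$-string gives $\mathrm{Res}_{B_{n-1}}L(\ul\lambda)=0\iff\tilde e_i\ul\lambda=0\ \forall i\iff\ul\lambda$ is a source of the $\sle$-crystal, and applying it to the Heisenberg $\mathfrak{sl}_2$ gives that the distinguished summand of $\mathrm{Res}_{B_{n-e}\times S_e}$ vanishes on $L(\ul\lambda)$ if and only if $\ul\lambda$ is a source of the $\slinf$-crystal. The necessity direction then drops out: a finite-dimensional $L(\ul\lambda)$ has all restrictions zero, so $\ul\lambda$ is a source of both crystals.

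For sufficiency I would use the classification of the varieties that can arise as $\Supp L(\ul\lambda)$ in this integral category: the generic stabilizer of any irreducible component is conjugate to $B_a\times S_e^{\times d}\times\{1\}^{\times f}$ with $a+ed+f=n$, i.e. it is assembled from a type-$B$ block, a number $d$ of blocks $S_e$, and $f$ free coordinates, with no ``exotic'' factor $S_k$ for $k\notin\{1,e\}$. Granting this, the $\slinf$-source condition $\mathrm{Res}_{B_{n-e}\times S_e}L(\ul\lambda)=0$ forces $d=0$, since $B_a\times S_e^{\times d}\times\{1\}^{\times f}\subseteq B_{n-e}\times S_e$ once $d\ge 1$, and the $\sle$-source condition $\mathrm{Res}_{B_{n-1}}L(\ul\lambda)=0$ forces $f=0$, since a free coordinate would exhibit the generic stabilizer inside $B_{n-1}$. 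With $d=f=0$ we get $a=n$, so the generic stabilizer is all of $B_n$ and $\Supp L(\ul\lambda)=\{0\}$, whence $L(\ul\lambda)$ is finite-dimensional. Note that both conditions are genuinely needed: dropping the $\sle$-condition leaves room for free coordinates ($f\ge 1$), and dropping the $\slinf$-condition leaves room for $S_e$-blocks ($d\ge 1$), each producing positive-dimensional support.

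The step I expect to be the main obstacle is the support classification invoked above: proving that no component of $\Supp L(\ul\lambda)$ has generic stabilizer involving a factor $S_k$ with $k\notin\{1,e\}$ --- not even $S_{2e}$, which would be invisible to both crystals and would break the equivalence. This is the genuine geometric content, and it rests on the compatibility of the categorical $\sle$- and Heisenberg actions on $\cO_{e,\ul{s}}$, together with a Losev-style computation showing that the only cuspidal degeneration in the symmetric-group directions is the minimal $S_e$-block, every larger $S_{me}$-block resolving into $m$ separate $S_e$-blocks. A secondary technical point, used in the second step, is the passage from the ``top'' recorded by a crystal operator to honest vanishing of the underlying functor on the simple $L(\ul\lambda)$, which relies on the highest-weight structure of the category and on the known compatibility between the abstract crystal and the categorical action.
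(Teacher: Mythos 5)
First, a framing point: the paper does not prove Theorem \ref{source vx} at all --- it is imported from \cite{ShanVasserot}, with the translation from Shan--Vasserot's ``primitive vector'' condition into sourcehood for a combinatorial $\slinf$-crystal due to \cite{Losev}, \cite{Gerber1}, \cite{Gerber2}, \cite{GerberN}. So the comparison is necessarily with the proof in the literature, and your sketch does follow its actual architecture: finite-dimensionality as $\Supp L(\ul{\lambda})=\{0\}$, the Bezrukavnikov--Etingof restriction criterion for supports, crystal operators as shadows of the categorical $\sle$- and Heisenberg actions, and a classification of the generic stabilizers of support components as $B_a\times S_e^{\times d}\times\{1\}^{\times f}$. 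The necessity direction and the $\sle$ half of sufficiency are fine as you state them, since $\operatorname{Res}_{B_{n-1}}$ really is the direct sum of all $i$-restriction functors.

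There is, however, a genuine gap in your sufficiency step for the Heisenberg side. Being an $\slinf$-source gives vanishing only of the \emph{distinguished summand} of $\operatorname{Res}_{B_{n-e}\times S_e}$ --- the piece cut out by the block of the cuspidal finite-dimensional simple of the rank-$e$ symmetric-group algebra --- not of the full restriction functor. Your inference ``$d\geq 1$ implies $B_a\times S_e^{\times d}\times\{1\}^{\times f}\subseteq B_{n-e}\times S_e$, hence $\operatorname{Res}_{B_{n-e}\times S_e}L(\ul{\lambda})\neq 0$, contradiction'' therefore contradicts nothing as written: the full restriction is nonzero for plenty of $\slinf$-sources (any source with $f\geq 1$, for instance). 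The repair is standard but must be made explicit: take $x$ generic in a component whose stabilizer has $d\geq 1$; then $\operatorname{Res}_{W_x}L(\ul{\lambda})$ is nonzero and supported at $0$, hence finite-dimensional over the parabolic subalgebra, so each $S_e$-tensor factor acts through the unique cuspidal simple, and transitivity of restriction then shows the distinguished summand itself is nonzero. Relatedly, you misplace the main difficulty: excluding factors $S_k$ with $k\notin\{1,e\}$ --- including your feared $S_{2e}$ --- is not deep geometry. A generic stabilizer of a support component must carry a nonzero finite-dimensional module over its Cherednik algebra, finite-dimensionals factor across products, and by Berest--Etingof--Ginzburg the type-$A$ algebra at parameter $c$ has a finite-dimensional module iff $c=r/k$ with $\gcd(r,k)=1$; at $c=1/e$ this forces $k=e$ and rules out $S_{2e}$ since $1/e=2/(2e)$ with $\gcd(2,2e)\neq 1$. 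The genuinely delicate input, which you invoke only in passing at the end, is that the combinatorial crystals on $\cF^2_{e,\ul{s}}$ compute the categorical operators: for $\sle$ this is \cite{Shan}, but for the Heisenberg action the statement ``$EM=0$ iff the crystal operator kills $[M]$'' is not a formal consequence of $\mathfrak{sl}_2$-categorification and is precisely the content of \cite{Losev}, \cite{Gerber1}, \cite{Gerber2}, \cite{GerberN}.
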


A direct combinatorial rule for computing the arrows in the $\slinf$-crystal was given in \cite{GerberN} and builds off of previous partial results in this direction by Gerber \cite{Gerber1},\cite{Gerber2} and Losev \cite{Losev}. The rule uses abacus combinatorics. We define the abacus $\cA(\ul{\lambda})$ of a charged bipartition $|\ul{\lambda},\ul{s}\rangle$ as $\cA(\ul{\lambda})=\left(\{\beta^1_1,\beta^1_2,\beta^1_3,\dots\},\{\beta^2_1,\beta^2_2,\beta^2_3,\dots\}\right)$ where $\beta^j_i$ is the $i$'th $\beta$-number of $(\lambda^j)^t$ given by $\beta^j_i=(\lambda^j)^t_i+s_j-i+1$ if $i\leq r$ where $r$ is the number of parts of $(\lambda^j)^t$, i.e. the number of columns of $\lambda^j$, and $\beta^j_i=s_j-i+1$ if $i>r$. We remark that we have taken the transposes of $\lambda^1$ and $\lambda^2$ in order to be in agreement with the conventions of \cite{Griffeth} and \cite{EtingofStoica}, and this is the component-wise transpose of our convention in \cite{GerberN} and in the papers \cite{Gerber1},\cite{Gerber2}. It is convenient to visualize $\cA(\ul{\lambda})$ as an array of beads and spaces assembled in two horizontal rows and infinitely many columns indexed by $\mathbb{Z}$; the bottom row corresponds to $\lambda^1$ and the top row to $\lambda^2$, with a bead in row $j$ and column $\beta^j_i$ for each $i\in\mathbb{N}$ and each $j=1,2$, and spaces otherwise. Far to the left the abacus consists only of beads, far to the right only of spaces, so computations always occur in a finite region. The beads in $\cA(\ul{\lambda})$ which have some space to their left correspond to the nonzero columns of $\ul{\lambda}$.

The rational Cherednik algebra itself is a deformation of $\mathbb{C}[x_1,\dots,x_n,y_1,\dots,y_n]\rtimes \mathbb{C} B_n$ with multiplication depending on a pair of parameters $(c,d)\in \mathbb{C}^2$ \cite{EtingofGinzburg}; see \cite{Griffeth} for the definition used in \cite{Griffeth} to combinatorially classify unitary representations. The reader should be warned that there are many slight variants on the definition so that conventions are often slightly different from author to author, making precise numerical and combinatorial details of cyclotomic rational Cherednik algebras a minefield. We write $H_{e,\ul{s}}(B_n)$ for the rational Cherednik algebra depending on parameters $(c,d)$ as in \cite{Griffeth} where $c$ and $d$ are determined from the Fock space parameter $(e,\ul{s})$ by the formulas \cite{ShanVasserot}:
$$c=\frac{1}{e},\qquad d=-\frac{1}{2}+\frac{s}{e}.$$

The algebra $H_{e,\ul{s}}(B_n)$ has a category $\cO_{e,\ul{s}}(n)$ of representations which in particular contains all finite-dimensional representations \cite{GGOR}. The irreducible representations in $\cO_{e,\ul{s}}(n)$ are labeled by $\mathcal{P}^2(n)$ and denoted $L(\ul{\lambda})$ for $\ul{\lambda}\in\mathcal{P}^2(n)$ \cite{GGOR}. The irreducible representation $L(\ul{\lambda})$ comes with a non-degenerate contravariant Hermitian form and $L(\ul{\lambda})$ is called unitary if this form is positive definite \cite{EtingofStoica}. The unitary $L(\ul{\lambda})$ were classified combinatorially by Griffeth \cite[Corollary 8.4]{Griffeth}. In fact, he considers arbitrary parameters $(c,d)\in\mathbb{C}^2$ without any reference to the Fock space. It is known that the resulting category $\cO$ is equivalent to either the one arising from a level $2$ Fock space as above, or to a tensor product of level $1$ Fock spaces \cite{Rouquier}. We will only consider parameters coming from level $2$ Fock spaces as above, as the case of a product of level $1$ Fock spaces is well-understood. Such equivalences don't behave well on unitary bipartitions but they respect crystals, so the results of this paper probably imply the result for all parameters.

The element $\sum_{i=1}^n x_iy_i+y_ix_i\in H_{e,\ul{s}}(B_n)$ acts on the irreducible $\mathbb{C} B_n$-representation $\ul{\lambda}$, the lowest weight space of $L(\ul{\lambda})$, by a scalar $c_{\ul{\lambda}}$. We have the following formula for $c_{\ul{\lambda}}$:
$$c_{\ul{\lambda}}=|\lambda^1|+\frac{s}{e}\left(|\lambda^2|-|\lambda^1|\right)-\frac{2}{e}\sum_{b\in\ul{\lambda}}\mathrm{ct}(b)$$
where the sum runs over all boxes $b$ in $\ul{\lambda}$ and $\mathrm{ct}(b)$ includes the shift by $\ul{s}$ as explained above.

\begin{lemma}\cite{EtingofStoica}\label{lemma ES}
A unitary representation $L(\ul{\lambda})$ is finite-dimensional if and only if $c_{\ul{\lambda}}=0$.
\end{lemma}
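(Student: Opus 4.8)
The plan is to recognize that Lemma~\ref{lemma ES} is a result of Etingof--Stoica, so the real task is to reconstruct \emph{why} it holds, since the excerpt quotes it as a citation. The key conceptual input is that the operator $h = \frac{1}{2}\sum_{i=1}^n (x_iy_i + y_ix_i)$ is (up to an additive constant) the \emph{grading element} of the rational Cherednik algebra: it measures the degree of homogeneous elements, acting on the degree-$k$ part of $L(\ul{\lambda})$ by the scalar $c_{\ul{\lambda}} + k$ after suitable normalization. Concretely, $[h, x_i] = x_i$ and $[h, y_i] = -y_i$, so $h$ acts on the $x$-graded pieces of the standard module $\Delta(\ul{\lambda}) = \mathbb{C}[x_1,\dots,x_n] \otimes \ul{\lambda}$ with eigenvalue $c_{\ul{\lambda}} + k$ on the piece of polynomial degree $k$. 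First I would establish this grading statement precisely, pinning down the normalization so that the lowest weight space $\ul{\lambda}$ sits in eigenvalue $c_{\ul{\lambda}}$.

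The next step uses the contravariant Hermitian form together with the geometry of $\cO$. On $L(\ul{\lambda})$ the form is positive definite precisely when $L(\ul{\lambda})$ is unitary, and the form is contravariant meaning $x_i$ and $y_i$ are adjoint up to sign. The operator $h$ is self-adjoint with respect to this form, so its eigenvalues are real, and on a unitary module the positivity of the form forces the $h$-eigenvalues on nonzero graded pieces to be \emph{nonnegative} in a controlled way relative to $c_{\ul{\lambda}}$. The crucial point is the finite-dimensionality criterion: $L(\ul{\lambda})$ is finite-dimensional if and only if it is annihilated above some degree, which on the level of the grading element means $h$ has only finitely many eigenvalues. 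For a unitary module I would argue that if $c_{\ul{\lambda}} > 0$ then raising operators $x_i$ cannot all act as zero on $\ul{\lambda}$ (else $\ul{\lambda}$ would already be the whole module and $c_{\ul\lambda}$ would have to vanish), producing an infinite tower of nonzero graded pieces and hence infinite dimension; conversely $c_{\ul{\lambda}} = 0$ is exactly the condition under which the unitary structure collapses the module to its lowest piece, i.e. $L(\ul{\lambda}) = \ul{\lambda}$ as a $\mathbb{C} B_n$-module, giving finite-dimensionality.

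The cleanest route to the forward direction is to combine the positivity of the form with the observation that $\sum_i x_i y_i + y_i x_i$ acts on each weight space by $c_{\ul{\lambda}}$ shifted by the degree, together with the fact that on a unitary module the contravariant form pairs degree-$k$ vectors with themselves positively. I would compute the form on $\sum_i x_i v$ for $v \in \ul{\lambda}$ and relate $\|x_i v\|^2$ to $c_{\ul{\lambda}} \|v\|^2$ via the commutation relations; this yields that $c_{\ul{\lambda}} = 0$ forces $x_i v = 0$ for all $i$, so no higher-degree vectors survive and $L(\ul{\lambda}) = \ul{\lambda}$, while $c_{\ul{\lambda}} \neq 0$ (necessarily $>0$ on a unitary module by the reality and positivity constraints) forces nonzero higher pieces.

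The main obstacle I anticipate is the bookkeeping of adjointness and normalization for the contravariant form: getting the signs right so that $\|x_i v\|^2$ comes out as a nonnegative multiple of $c_{\ul{\lambda}}\|v\|^2$ rather than its negative, and correctly handling the sum over $i$ versus a single $x_i$. One must be careful that the relevant computation is $\sum_i \|x_i v\|^2 = \langle (\sum_i y_i x_i) v, v\rangle$ and then rewrite $\sum_i y_i x_i$ in terms of $\sum_i (x_iy_i + y_ix_i)$ plus lower-order group-algebra terms that act on $\ul{\lambda}$ by a known scalar; reconciling these scalars with the stated formula for $c_{\ul{\lambda}}$ is where the delicate convention-dependent constants enter, exactly the ``minefield'' of normalizations the paper warns about. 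Since the excerpt permits me to assume Lemma~\ref{lemma ES} as already established, I would in practice cite \cite[Proposition 4.1]{EtingofStoica} for the precise computation and present the grading-element argument above as the conceptual justification.
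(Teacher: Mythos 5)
The paper offers no proof of this lemma at all: it is quoted verbatim from Etingof--Stoica (it is \cite[Proposition 4.1]{EtingofStoica} combined with the observation that $L(\ul{\lambda})=\ul{\lambda}$ iff $c_{\ul{\lambda}}=0$), so your decision to ultimately defer to that citation is exactly what the paper does, and your reconstruction of the easy direction is sound: contravariance gives $\sum_i\|x_iv\|^2=\langle\sum_i y_ix_i\,v,v\rangle$ for $v$ in the lowest weight space, where $y_iv=0$, so this equals $c_{\ul{\lambda}}\|v\|^2$ up to normalization; positivity then yields $c_{\ul{\lambda}}\geq 0$, with equality iff all $x_iv=0$, i.e.\ iff $L(\ul{\lambda})=\ul{\lambda}$.

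The gap is in your forward direction. Showing that some $x_iv\neq 0$ when $c_{\ul{\lambda}}>0$ produces a nonzero degree-one piece, but it does not ``produce an infinite tower'': the identity $\sum_i\|x_iw\|^2=c_{\ul{\lambda}}\|w\|^2$ is special to the lowest weight space, since in higher degrees $\sum_i y_ix_i$ differs from the grading element by a group-algebra term acting by different scalars on different $W$-isotypic components, so the argument does not iterate. The ingredient you are missing is the $\mathfrak{sl}_2$-triple $\bigl(\tfrac12\sum_i x_i^2,\,-\tfrac12\sum_i y_i^2,\,\mathbf{h}\bigr)$, which exists because $B_n$ is a \emph{real} reflection group. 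Either one argues that $\|E^kv\|^2$ for $v\in\ul{\lambda}$ is a product of the positive quantities $c_{\ul{\lambda}},c_{\ul{\lambda}}+2,\dots$, giving the infinite tower along powers of $E=\tfrac12\sum_i x_i^2$; or, as Etingof--Stoica do, one notes that the $\mathbf{h}$-spectrum of any finite-dimensional module is symmetric about $0$ by $\mathfrak{sl}_2$-theory, which forces the lowest eigenvalue $c_{\ul{\lambda}}\leq 0$ and hence, combined with the unitarity inequality $c_{\ul{\lambda}}\geq 0$, gives $c_{\ul{\lambda}}=0$. This is also precisely why the criterion fails for $G(\ell,1,n)$ with $\ell>2$, as the paper remarks in Section 1. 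Since the paper only cites the result, none of this affects its correctness, but your sketch as written would not compile into a proof.
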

\section{Rectangles parametrize finite-dimensional unitary irreducible representations}\label{section rectangle}
This section consists of the proof of Theorem \ref{unitary+fd}. The proof is broken into two steps. First, we use Lemma \ref{lemma ES} together with the conditions in \cite[Corollary 8.4]{Griffeth} to characterize the unitary finite-dimensional irreducible representations labeled by bipartitions with only one non-empty component. Second, we show that if $\lambda^1$ and $\lambda^2$ are both non-empty partitions, then $L(\lambda^1,\lambda^2)$ is never both unitary and finite-dimensional. 

\begin{proposition}\label{rectangle}
Fix $\ul{\lambda}\in\mathcal{P}^2$ such that $\ul{\lambda}=(\lambda,\emptyset)$. Then $L(\ul{\lambda})$ is a finite-dimensional and unitary $H_{e,\ul{s}}(B_n)$-representation for some Fock space parameter $(e,\ul{s})$ if and only if $\lambda$ is a rectangle. Conversely, given any rectangle $\lambda$, for each $e\geq 2$ there is a unique charge $\ul{s}$ (up to shift) such that $L(\lambda,\emptyset)$ is finite-dimensional and unitary.
\end{proposition}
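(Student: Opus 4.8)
The plan is to use Lemma \ref{lemma ES} to reduce the problem to intersecting Griffeth's combinatorial unitarity locus with the vanishing locus of the $c$-function: a unitary $L(\ul{\lambda})$ is finite-dimensional exactly when $c_{\ul{\lambda}}=0$. First I would specialize the content formula to $\ul{\lambda}=(\lambda,\emptyset)$. Taking $\ul{s}=(0,s)$ so that $s_1=0$ and writing $n=|\lambda|$, the sum in the $c$-function runs only over $\lambda^1=\lambda$ and becomes the ordinary sum of contents, giving
$$c_{(\lambda,\emptyset)}=n-\frac{s}{e}\,n-\frac{2}{e}\sum_{b\in\lambda}\mathrm{ct}(b).$$
Thus $c_{(\lambda,\emptyset)}=0$ is equivalent to the single linear constraint $\sum_{b\in\lambda}\mathrm{ct}(b)=\tfrac{1}{2}n(e-s)$, and the proposition reduces to determining for which $\lambda$ this constraint is compatible with the unitarity conditions of \cite[Corollary 8.4]{Griffeth}.

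For the sufficiency together with the uniqueness of the charge, I would take $\lambda$ to be a rectangle with $r$ rows and $q$ columns, so $n=rq$. A direct computation gives $\sum_{b\in\lambda}\mathrm{ct}(b)=\tfrac{1}{2}rq(q-r)$, whence
$$c_{(\lambda,\emptyset)}=\frac{n}{e}\bigl(e-s-q+r\bigr).$$
Since $n,e>0$, this vanishes if and only if $s=e+r-q$, i.e. $r-q=s-e$, recovering the numerical condition in Theorem \ref{unitary+fd}(2) and showing that for each $e\geq 2$ there is exactly one charge $\ul{s}=(0,e+r-q)$, up to the diagonal shift, at which $c_{(\lambda,\emptyset)}=0$. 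It then remains to check that at this parameter $L(\lambda,\emptyset)$ is genuinely unitary; I would verify that the boundary value $s=e+r-q$ satisfies the inequalities of \cite[Corollary 8.4]{Griffeth} for the bipartition $(\lambda,\emptyset)$. Unitarity together with $c_{(\lambda,\emptyset)}=0$ then yields finite-dimensionality by Lemma \ref{lemma ES}.

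For the necessity I would assume $L(\lambda,\emptyset)$ is both unitary and finite-dimensional and deduce that $\lambda$ is a rectangle. Writing the content sum in terms of the parts as $\sum_{b\in\lambda}\mathrm{ct}(b)=\tfrac{1}{2}\bigl(\sum_i\lambda_i^2+n-2\sum_i i\lambda_i\bigr)$, the equation $c_{(\lambda,\emptyset)}=0$ becomes a quadratic relation among the $\lambda_i$. I would then feed in the unitarity inequalities of \cite[Corollary 8.4]{Griffeth}, specialized to a single non-empty component, and argue that these inequalities place the parameter on the boundary of the unitary region exactly when consecutive parts are forced to coincide, so that the only partitions of $n$ compatible with both $c_{(\lambda,\emptyset)}=0$ and unitarity are those with $\lambda_1=\cdots=\lambda_r$.

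The main obstacle is this last step: extracting from \cite[Corollary 8.4]{Griffeth} the precise form of the unitarity conditions for $(\lambda,\emptyset)$ in our Fock-space parameters, which requires passing carefully through $c=1/e$ and $d=-\tfrac12+s/e$ in view of the convention differences flagged in the text, and then running the combinatorial argument that unitarity plus $c_{(\lambda,\emptyset)}=0$ leaves no room for a non-rectangular shape. The content computations themselves are routine; the delicate point is to show that the hypersurface $c_{(\lambda,\emptyset)}=0$ meets the unitary locus only along its boundary, and that this boundary is cut out precisely by the rectangular partitions.
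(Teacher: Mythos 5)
Your reduction via Lemma \ref{lemma ES} and your computations are correct and match the paper's: for a $q\times r$ rectangle, $c_{(\lambda,\emptyset)}=\frac{n}{e}(e-s-q+r)$, giving the unique charge $s=e-q+r$ up to diagonal shift. But the heart of the proposition is exactly the step you defer twice, first as ``I would verify that the boundary value $s=e+r-q$ satisfies the inequalities of \cite[Corollary 8.4]{Griffeth}'' and then as ``I would \ldots argue that these inequalities place the parameter on the boundary of the unitary region exactly when consecutive parts are forced to coincide.'' As written this is a plan, not a proof: you never exhibit which case of Griffeth's classification holds for a rectangle (so sufficiency is not established), and you never derive a contradiction for a non-rectangular $\lambda$ (so necessity is not established). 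Your suggested route for necessity --- expanding $\sum_b\ct(b)$ as a quadratic in the parts $\lambda_i$ and hoping the inequalities force $\lambda_1=\cdots=\lambda_r$ --- is not obviously workable and is not what makes the argument go through.

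The missing idea is the following normalization. Under the assumption $c_{\ul{\lambda}}=0$ one has
$$d+mc=\frac{1}{2}+\frac{1}{e}\Bigl(m-\frac{2}{n}\sum_{b\in\lambda}\ct(b)\Bigr),$$
so $d+mc\leq\frac12$ (resp.\ $=\frac12$) if and only if $m\leq\frac{2}{n}\sum_b\ct(b)$ (resp.\ $m=\frac{2}{n}\sum_b\ct(b)$), i.e.\ every threshold in \cite[Corollary 8.4]{Griffeth} is compared against \emph{twice the average content} of $\lambda$. Cases (a) and (b) need $d+\ct(b_1)c\leq\frac12$ with $\ct(b_1)=q-1$, which forces $q-1\leq\frac{2}{n}\sum_b\ct(b)$ and holds only for $\lambda=(n)$ (where case (d) in fact applies). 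Cases (c), (d), (e) need $d+mc=\frac12$ for an integer $m\geq\ct(b_2)=q-r$, where $b_2$ is the removable box of largest content; since twice the average content equals $q-r$ exactly when $\lambda$ is the $q\times r$ rectangle and is strictly smaller as soon as any boxes are added below that rectangle, the equality $m=\frac{2}{n}\sum_b\ct(b)\geq q-r$ is achievable precisely for rectangles, where case (d) holds. This single monotonicity observation disposes of both directions at once, and it is what your proposal still needs to supply.
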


\begin{proof}
Assume that $c_{\ul{\lambda}}=0$. Then we have %$n(e-s)=2\sum_b\mathrm{ct}(b)$ or, 
$$\frac{s}{e}=\frac{1}{n}\left( n-\frac{2}{e}\sum_{b\in\ul{\lambda}}\mathrm{ct}(b)\right)=1-\frac{2}{en}\sum_{b\in\ul{\lambda}}\mathrm{ct}(b)$$
and thus the parameters for the rational Cherednik algebra are 
$$c=\frac{1}{e},\qquad d=\frac{1}{2}-\frac{2}{en}\sum_{b\in\ul{\lambda}}\mathrm{ct}(b)$$
Since $s$ is a function of $e$ and $n$, it is clear that there if there exists $\ul{s}$ such that $L(\lambda)$ is unitary, then $\ul{s}$ is unique up to adding $(a,a)$.% (which does not change the parameter for the Cherednik algebra).

We suppose that $c_{\ul{\lambda}}=0$ and check when conditions (a)-(e) of \cite[Corollary 8.4]{Griffeth} can hold in order to see when $L(\ul{\lambda})$ is unitary; by Lemma \ref{lemma ES} this is equivalent to checking when $L(\ul{\lambda})$ is both finite-dimensional and unitary. 
For an integer $m$, consider the quantity
$$d+mc=\frac{1}{2}+\frac{1}{e}\left(m-\frac{2}{n}\sum_b\mathrm{ct}(b)\right)$$
under our assumption $c_{\ul{\lambda}}=0$. The inequality $d+mc\leq \frac{1}{2}$ holds if and only if $m\leq \frac{2}{n}\sum_b\mathrm{ct}(b)$, and $d+mc=\frac{1}{2}$ if and only if $m=\frac{2}{n}\sum_b\mathrm{ct}(b)$. 

Let $b_1$ be the box of largest content in $\lambda$.
If $\lambda$ has $q$ columns then $\mathrm{ct}(b_1)=q-1$. The inequality $d+\mathrm{ct}(b_1)c\leq \frac{1}{2}$ holds if and only if $q-1\leq \frac{2}{n}\sum_b\mathrm{ct}(b)$. If $q=n$ then $\lambda=(n)$ and we have equality, and moreover, $b_2=b_1$ so case (d) holds and $L(n)$ is unitary. 
Suppose next that $\lambda$ is a rectangle with $q$ columns and $r$ rows where $r>1$. So $n=qr$ and $\frac{2}{n}\sum_b\mathrm{ct}(b)=\frac{2}{n}\left(\frac{n(q-r)}{2}\right)=q-r$. If $r>1$ then we have $q-1>\frac{2}{n}\sum_b\mathrm{ct}(b)$, and thus $d+\mathrm{ct}(b_1)c>\frac{1}{2}$. Finally, if $\lambda$ is not a rectangle but its first row has $q$ boxes, then clearly $q-r>\frac{2}{n}\sum_b\mathrm{ct}(b)$ so we also have $q-1>\frac{2}{n}\sum_b\mathrm{ct}(b)$. It follows that for arbitrary $\lambda\neq (n)$, cases (a) and (b) of \cite[Corollary 8.4]{Griffeth}  cannot occur. 

The remaining three cases (c),(d),(e) of \cite[Corollary 8.4]{Griffeth} all require the equation $d+mc=\frac{1}{2}$ to be satisfied for some integer $m$, so equivalently, $m=\frac{2}{n}\sum_{b}\mathrm{ct}(b)$ for some integer $m$. If $\lambda$ is a rectangle with $q$ columns and $r$ rows, $r>1$, then the solution is $m=q-r=\mathrm{ct}(b_2)$, so case (d) holds and $L(\lambda,\emptyset)$ is unitary. Otherwise, observe that cases (c),(d),(e) all require $m\geq \mathrm{ct}(b_2)$, where $b_2$ is the removable box of largest content. Writing $\lambda=(q^r, \lambda_{r+1},\lambda_{r+2}\dots)$, $q>\lambda_{r+1}$, we have $\mathrm{ct}(b_2)=q-r$ which is $2$ times the average content of the boxes contained in the $q$ by $r$ rectangle comprising the first $r$ rows of $\lambda$. Adding boxes below this rectangle clearly lowers the average content of the boxes in the diagram, so we always have $m<\mathrm{ct}(b_2)$ if $\lambda$ is not a rectangle. Therefore cases (c),(d),(e) cannot hold if $\lambda$ is not a rectangle.

We conclude that given $e\geq 2$ and a rectangle $\lambda$ with $r$ rows and $q$ columns, $L(\lambda,\emptyset)$ is finite-dimensional and unitary exactly when $s=e-q+r$; and if $\lambda$ is not a rectangle, then $L(\lambda,\emptyset)$ is never finite-dimensional and unitary.
\end{proof}

\begin{remark}
If $\ul{\lambda}=(\lambda,\emptyset)$ where $\lambda$ is a rectangle with $q$ columns and $r$ rows then $s=e-q+r$ is the smallest value of $s$ for which $L(\ul{\lambda})$ is finite-dimensional \cite{GerberN}.
\end{remark}
\begin{remark}\label{switch} Switching the components $\lambda^1$ and $\lambda^2$ is induced by an isomorphism of the underlying rational Cherednik algebras sending $d$ to $-d$; on the charge for the Fock space it sends $s$ to $e-s$. Thus an analogous result to Proposition \ref{rectangle} holds for $(\emptyset, \lambda)$ and we leave the formula to the reader.
\end{remark}

Next, we consider the case that both components of $\ul{\lambda}$ are non-empty. 
We will make an abacus argument. Following \cite[Definition 2.2]{JaconLecouvey}, given $\cA(\ul{\lambda})$ the abacus of $|\ul{\lambda},\ul{s}\rangle$, we say that $\cA(\ul{\lambda})$ has an $e$-period if there exist $\beta^{j_1}_{i_1},\beta^{j_2}_{i_2},\dots,\beta^{j_e}_{i_e}\in\cA(\ul{\lambda})$ such that $j_1\geq j_2\geq\dots\geq j_e$, $\beta^{j_{m+1}}_{i_{m+1}}=\beta^{j_m}_{i_m}-1$ for each $m=1,\dots,e-1$, $\beta^{j_1}_{i_1}$ is in the rightmost column of $\cA(\lambda)$ containing a bead, and for each $m=1,\dots,e$ if $j_m=2$ then $\beta^2_{i_m}$ has an empty space below it. Let $\mathrm{Per}^1=\{\beta^{j_1}_{i_1},\beta^{j_2}_{i_2},\dots,\beta^{j_e}_{i_e}\}\subset\cA(\ul{\lambda})$ denote the $e$-period of $\cA(\ul{\lambda})$ if it exists, and call it the first $e$-period. The $k$'th $e$-period $\mathrm{Per}^k$ of $\cA(\ul{\lambda})$ is defined recursively as the $e$-period of $\left(\dots\left(\cA({\lambda})\setminus \mathrm{Per^1}\right)\setminus\mathrm{Per^2}\dots\right)\setminus\mathrm{Per}^{k-1}$ if it exists and $\mathrm{Per}^1,\dots,\mathrm{Per}^{k-1}$ exist. We
say that $\cA(\ul{\lambda})$ is totally $e$-periodic if $\mathrm{Per}^k$ exists for any $k\in\N$ (see \cite[Definition 5.4]{JaconLecouvey}). The charged bipartition $|\ul{\lambda},\ul{s}\rangle$ is a source vertex for the $\sle$-crystal on $\mathcal{F}^2_{e,\ul{s}}$ if and only if $\cA(\ul{\lambda})$ is totally $e$-periodic \cite[Theorem 5.9]{JaconLecouvey}. 
In \cite{GerberN} we gave a criterion for checking if $|\ul{\lambda},\ul{s}\rangle$ is a source vertex in the $\slinf$-crystal in terms of a pattern avoidance condition on $\cA(\ul{\lambda})$ \cite[Theorem 7.13]{GerberN}. 

\begin{lemma}\label{>=e col} If $\ul{\lambda}=(\lambda^1,\lambda^2)$ with both $\lambda^1\neq \emptyset$ and $\lambda^2\neq \emptyset$ and the abacus $\cA(\ul{\lambda})$ of $|\ul{\lambda},\ul{s}\rangle$ is totally $e$-periodic, then $\ul{\lambda}$ has at least $e$ nonzero columns. 
\end{lemma}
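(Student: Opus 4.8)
The plan is to work entirely on the abacus and to reduce the statement to a counting bound on the first $e$-period. First I would record the dictionary between columns and beads: the number of nonzero columns of $\ul{\lambda}$ equals the number of beads of $\cA(\ul{\lambda})$ having a space to their left, which is $\lambda^1_1+\lambda^2_1$, the sum of the numbers of columns of the two components. The key elementary fact I would isolate is a \emph{run bound}: within a single row $j$, a set of beads occupying several \emph{consecutive} columns and containing the rightmost bead $\beta^j_1$ of that row must be an initial segment $\beta^j_1,\dots,\beta^j_k$, and from $\beta^j_i-\beta^j_{i+1}=(\lambda^j)^t_i-(\lambda^j)^t_{i+1}+1$ this forces $(\lambda^j)^t_1=\dots=(\lambda^j)^t_k$. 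Since $\lambda^j\neq\emptyset$ gives $(\lambda^j)^t_1\geq 1$, all of these parts are nonzero, so $k\leq \lambda^j_1$, the number of nonzero columns of $\lambda^j$. This is exactly where both components being nonempty enters: it guarantees a genuine gap in each row immediately after the last nonzero column, which caps the length of any such run.

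Next I would apply this to the first $e$-period $\mathrm{Per}^1$, which exists because $\cA(\ul{\lambda})$ is totally $e$-periodic. By definition $\mathrm{Per}^1$ consists of $e$ beads lying in the $e$ consecutive columns $C,C-1,\dots,C-e+1$, where $C$ is the rightmost occupied column, with non-increasing row indices $j_1\ge\cdots\ge j_e$. Thus $\mathrm{Per}^1$ splits into a top block of $t$ beads in row $2$ occupying columns $C,\dots,C-t+1$ and a bottom block of $e-t$ beads in row $1$ occupying columns $C-t,\dots,C-e+1$, for some $0\le t\le e$. I would then invoke the ``empty space below'' requirement on the row-$2$ beads of the period: it forces row $1$ to be empty in columns $C-t+1,\dots,C$, so that the bottom block begins exactly at the rightmost row-$1$ bead $\beta^1_1=C-t$; and since $C$ is the globally rightmost bead, the top block begins at the rightmost row-$2$ bead $\beta^2_1=C$.

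With the two blocks identified as consecutive initial segments of their respective rows, the run bound applies directly: the top block gives $t\le \lambda^2_1$ and the bottom block gives $e-t\le\lambda^1_1$. Adding these yields $e\le \lambda^1_1+\lambda^2_1$, i.e. $\ul{\lambda}$ has at least $e$ nonzero columns. The boundary cases $t=0$ (the rightmost bead lies in row $1$, so the whole period lies in row $1$) and $t=e$ (the whole period lies in row $2$) are handled by the same two inequalities.

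I expect the main obstacle to be the bookkeeping in the second paragraph: one must argue cleanly that the empty-below condition, together with $C$ being the rightmost occupied column, pins the top and bottom blocks to be precisely the topmost beads $\beta^2_1,\dots,\beta^2_t$ and $\beta^1_1,\dots,\beta^1_{e-t}$ at consecutive columns, rather than some later beads in either row. Once this identification is secured the inequality is immediate, and I note that only the existence of $\mathrm{Per}^1$, rather than full total $e$-periodicity, is actually used.
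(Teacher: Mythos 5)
Your proof is correct and follows essentially the same route as the paper's: both identify $\mathrm{Per}^1$ as an initial segment of the top row followed by an initial segment of the bottom row (your $t$ is the paper's $a$), pinned down by the empty-space-below condition, and both use the fact that consecutive occupied columns force equal, hence nonzero, column lengths in each nonempty component. The only cosmetic differences are that you isolate the ``run bound'' as a separate step and observe explicitly that only the existence of the first $e$-period is needed.
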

\begin{proof}Suppose $\cA(\ul{\lambda})$ is totally $e$-periodic and consider the maximal beta-number $\beta^j_1$ in $\cA(\ul{\lambda})$ (possibly it occurs twice, as $\beta^2_1$ and $\beta^1_1$). It must correspond to at least one nonzero column of $\ul{\lambda}$ since both components of $\ul{\lambda}$ are nonempty. If $\beta^1_1$ is maximal then the whole first $e$-period $\mathrm{Per}^1$ lies in the bottom row of $\cA(\ul{\lambda})$. Since $\lambda^1\neq \emptyset$, $\mathrm{Per}^1$ corresponds to $e$ nonzero columns of $\lambda^1$ of the same size. If $\beta^2_1$ is maximal then either all of $\mathrm{Per}^1$ lies in the top row of $\cA(\ul{\lambda})$ and corresponds to $e$ nonzero columns of $\lambda^2$ of the same size, or the first $a$ beads of $\mathrm{Per}^1$ are $\beta^2_1,\dots \beta^2_a$ lying in the top row   and these must correspond to $a$ nonzero columns of $\lambda^2$ since $\lambda^2\neq \emptyset$; while the remaining $e-a$ beads of $\mathrm{Per}^1$ are $\beta^1_{1},\dots, \beta^1_{e-a}$ lying in the bottom row and these must correspond to $e-a$ nonzero columns of $\lambda^1$ since $\lambda^1\neq \emptyset$. So $\ul{\lambda}$ has at least $e$ columns.
\end{proof}

\begin{lemma}\label{e col} If $\ul{\lambda}=(\lambda^1,\lambda^2)$ with both $\lambda^1\neq \emptyset$ and $\lambda^2\neq \emptyset$ and $\ul{\lambda}$ has exactly $e$ nonzero columns then $L(\ul{\lambda})$ is not finite-dimensional.
\end{lemma}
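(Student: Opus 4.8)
The plan is to argue by contradiction using Theorem \ref{source vx}: if $L(\ul{\lambda})$ were finite-dimensional then $\ul{\lambda}$ would have to be a source vertex for \emph{both} the $\sle$- and the $\slinf$-crystal, so it suffices to show that $\ul{\lambda}$ fails to be a source vertex for at least one of them. First I would dispose of the $\sle$-crystal: if $\cA(\ul{\lambda})$ is not totally $e$-periodic then by \cite[Theorem 5.9]{JaconLecouvey} it is not a source vertex for the $\sle$-crystal, and we are done. Hence the whole content lies in the remaining case, where I assume $\cA(\ul{\lambda})$ \emph{is} totally $e$-periodic and aim to show it is not a source vertex for the $\slinf$-crystal.

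Under this assumption I would first pin down the shape of the first $e$-period. As in the proof of Lemma \ref{>=e col}, total $e$-periodicity forces the active beads (those corresponding to nonzero columns) to sit at the top of $\cA(\ul{\lambda})$ at consecutive positions: since $\ul{\lambda}$ has exactly $e$ nonzero columns, with $a\geq 1$ of them in $\lambda^1$ and $b\geq 1$ in $\lambda^2$ and $a+b=e$, these $e$ active beads must be precisely the beads of the first $e$-period $\mathrm{Per}^1$, the $b$ top-row beads occupying the rightmost positions $P,P-1,\dots,P-b+1$ and the $a$ bottom-row beads the positions $P-b,\dots,P-e+1$, while everything strictly to the left is fully packed background. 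The key local observation I would then establish is that the two positions immediately to the left of $\mathrm{Per}^1$ in each row, namely the bottom-row position $P-e$ and the top-row position $P-b$, are \emph{empty}. Indeed, if either were occupied by a background bead, that bead would either have a space to its left, producing an $(e+1)$-st active column and contradicting that there are exactly $e$ of them, or it would pack the row solidly up to $\mathrm{Per}^1$, so that the leftmost period bead in that row would fail to be active, again a contradiction.

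Finally I would exhibit an incoming $\slinf$-arrow. Shifting the whole first period $\mathrm{Per}^1$ one step to the left moves each of its $e$ beads into an empty slot identified above; by the previous paragraph this collides with nothing, so it produces a genuine charged bipartition $\ul{\mu}$ with $|\ul{\mu}|=|\ul{\lambda}|-e$, obtained from $\ul{\lambda}$ by deleting one box from each of its $e$ columns. Since the $\slinf$-crystal adds $e$ boxes at a time by shifting a period to the right, this displays $\ul{\mu}\to\ul{\lambda}$ as an incoming arrow, so $\ul{\lambda}$ is not a source vertex for the $\slinf$-crystal, and by Theorem \ref{source vx} the module $L(\ul{\lambda})$ is not finite-dimensional. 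I expect the main obstacle to be precisely this last identification: the $\slinf$-crystal of \cite[Theorem 7.13]{GerberN} is governed by a pattern-avoidance condition on \emph{reduced} $e$-periods rather than by naive one-step shifts, so the real work is to confirm that the separation of $\mathrm{Per}^1$ from the packed region below it is exactly the reducible pattern detected there, and that the leftward shift indeed realizes the relevant $\slinf$-crystal edge rather than merely an arbitrary abacus move.
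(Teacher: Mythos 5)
Your argument tracks the paper's own proof through all of the structural analysis: contradiction via Theorem \ref{source vx}, total $e$-periodicity forcing the $e$ active beads to be exactly the first $e$-period split between the two rows (with at least one bead in each row since both components are nonempty), and the identification of the two empty positions immediately to the left of the last period bead in each row. Where you diverge is the final step, and this is where your proposal is weaker than it needs to be. The paper does \emph{not} construct an incoming $\slinf$-arrow by shifting $\mathrm{Per}^1$ to the left; it simply observes that the two empty spaces you found --- the one below/left of $\beta^2_a$ in the top row and the one left of $\beta^1_{e-a}$ in the bottom row --- constitute precisely the forbidden pattern in the source-vertex criterion of \cite[Theorem 7.13]{GerberN}, which is stated as a pattern-avoidance condition on the abacus, not as the existence of a particular abacus move. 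So the ``real work'' you flag at the end (confirming that the leftward shift of the period realizes a genuine $\slinf$-crystal edge, worrying about reduced periods, etc.) is not needed: once the forbidden pair of spaces is exhibited, the criterion immediately gives that $|\ul{\lambda},\ul{s}\rangle$ is not a source vertex in the $\slinf$-crystal. As written, your proof is incomplete at exactly the point you identify, but the gap closes by replacing your explicit-arrow construction with a direct appeal to the pattern-avoidance formulation of \cite[Theorem 7.13]{GerberN}; if you insist on producing the arrow itself, you would indeed have to re-derive part of the machinery of \cite{GerberN}, which is more than the lemma requires.
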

\begin{proof}Suppose $L(\ul{\lambda})$ is finite-dimensional, then $\cA(\ul{\lambda})$ is totally $e$-periodic by \ref{source vx} and \cite[Theorem 5.9]{JaconLecouvey}.
%Suppose $\ul{\lambda}$ has exactly $e$ nonzero columns.
 Then the beads of $\cA(\ul{\lambda})$ labeling the nonzero columns of $\ul{\lambda}$ must comprise the first $e$-period $\mathrm{Per}^1$ of $\cA(\ul{\lambda})$, and $\mathrm{Per}^1$ is as at the end of the previous proof where the first $a$ beads of $\mathrm{Per}^1$ are in the top row of $\cA(\ul{\lambda})$ (corresponding to $a$ nonzero columns of $\lambda^2$), and the last $e-a$ beads of $\mathrm{Per}^1$ are in the bottom row of $\cA(\ul{\lambda})$.  
But then the space in the bottom row to the left of $\beta_{e-a}^1$ and the space in the top row to the left of $\beta_a^2$ form a pair of spaces violating the condition in \cite[Theorem 7.13]{GerberN}, so $|\ul{\lambda},\ul{s}\rangle$ is not a source vertex in the $\slinf$-crystal, and in particular, $L(\ul{\lambda})$ is not finite-dimensional by Theorem \ref{source vx}. 
\end{proof}

\begin{proposition}\label{nonempty components} If $\ul{\lambda}=(\lambda^1,\lambda^2)$ with both $\lambda^1\neq \emptyset$ and $\lambda^2\neq \emptyset$ then $L(\ul{\lambda})$ is never both unitary and finite-dimensional.
\end{proposition}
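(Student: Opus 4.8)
The plan is to argue by contradiction and to play the combinatorial lower bound on columns coming from finite-dimensionality against an upper bound on contents coming from unitarity. So suppose $L(\ul{\lambda})$ were both unitary and finite-dimensional with $\lambda^1\neq\emptyset$ and $\lambda^2\neq\emptyset$. Finite-dimensionality together with unitarity gives $c_{\ul{\lambda}}=0$ by Lemma \ref{lemma ES}, and finite-dimensionality alone makes $|\ul{\lambda},\ul{s}\rangle$ a source vertex for both the $\sle$- and the $\slinf$-crystal by Theorem \ref{source vx}; being an $\sle$-source means $\cA(\ul{\lambda})$ is totally $e$-periodic. Lemma \ref{>=e col} then forces $\ul{\lambda}$ to have at least $e$ nonzero columns, and Lemma \ref{e col} rules out exactly $e$. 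Hence $\ul{\lambda}$ has strictly more than $e$ nonzero columns, and this is the configuration I must contradict.

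Next I would record the arithmetic of Griffeth's unitarity walls. Since $c=\frac1e$ and $d=-\frac12+\frac se$, one computes $d+mc=-\frac12+\frac{s+m}{e}$, so $d+mc\leq\frac12$ holds exactly when $m\leq e-s$ and $d+mc=\frac12$ exactly when $m=e-s$. Arguing as in Proposition \ref{rectangle}, the conditions (a)--(e) of \cite[Corollary 8.4]{Griffeth} then allow unitarity only when the removable box $b_2$ of largest charged content satisfies $\mathrm{ct}(b_2)\leq e-s$ (cases (a),(b) impose the stronger bound on the largest-content box $b_1$, while cases (c),(d),(e) require equality $\mathrm{ct}(b_2)=e-s$). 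Simultaneously, the equation $c_{\ul{\lambda}}=0$ determines $s$ in terms of the charged contents of $\ul{\lambda}$, now summed over both components with the shift carried by $\lambda^2$.

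The remaining, and hardest, step is to show that a totally $e$-periodic $\ul{\lambda}$ with two nonempty components and more than $e$ nonzero columns necessarily carries a removable box of charged content strictly larger than $e-s$, contradicting the bound just quoted. I would read this off the abacus, extending the mechanism of Lemma \ref{e col}: the first $e$-period $\mathrm{Per}^1$ absorbs only $e$ of the more than $e$ beads labeling nonzero columns, so at least one column-bead survives beyond $\mathrm{Per}^1$; tracing it to a corner of $\lambda^1$ or $\lambda^2$ should produce a removable box sitting strictly to the right of the wall. The subtlety — and the reason one cannot argue from the column count alone — is that the relevant bound involves charged content rather than mere position, so the argument must genuinely use the totally $e$-periodic structure, which forces the beads of the two components to interlock and thereby pushes one corner's charged content past $e-s$; it must moreover be made uniform over which component contains $b_1$ and over all five of Griffeth's cases. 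I expect the cleanest packaging to be a single inequality showing that $c_{\ul{\lambda}}=0$ with both components nonempty and more than $e$ columns is simply incompatible with $\mathrm{ct}(b_2)\leq e-s$.
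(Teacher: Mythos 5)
Your first half is sound and matches the paper: finite-dimensionality makes $\cA(\ul{\lambda})$ totally $e$-periodic, Lemma \ref{>=e col} gives at least $e$ nonzero columns, and Lemma \ref{e col} excludes exactly $e$, so $\ul{\lambda}$ would have more than $e$ nonzero columns. But the second half, which you yourself flag as ``the remaining, and hardest, step,'' is where the actual content lies, and it is not carried out; what you sketch in its place is unlikely to close. Two concrete problems. First, you are citing the wrong unitarity classification: \cite[Corollary 8.4]{Griffeth} governs bipartitions with one empty component (that is why Proposition \ref{rectangle} uses it); when both $\lambda^1$ and $\lambda^2$ are nonempty the relevant statement is \cite[Corollary 8.5]{Griffeth}, whose seven cases (a)--(g) impose conditions on boxes of \emph{both} components and on \emph{both} combinations $d+mc$ and $-d+mc$. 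Your sketch only extracts the wall $d+mc\leq\frac12$, i.e.\ $m\leq e-s$. That single inequality still involves $s$, and no contradiction with ``more than $e$ columns'' can come out of it alone, since $s$ is a free integer parameter.

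The paper's key observation is that in each case of Corollary 8.5 there are two complementary inequalities, one bounding a column/row count of the form $\#\{\hbox{col}(\lambda^1)\}+\#\{\hbox{row}(\lambda^2)\}-1$ by $e-s$ and another bounding $\#\{\hbox{col}(\lambda^2)\}+\#\{\hbox{row}(\lambda^1)\}-1$ by $s$; \emph{adding} them eliminates $s$ entirely and, using $\#\{\hbox{row}(\lambda^j)\}\geq 1$ for both $j$ (this is exactly where both components being nonempty enters), yields the clean bound $\#\{\hbox{columns}(\ul{\lambda})\}\leq e$, which contradicts the crystal-theoretic lower bound. Your proposed substitute --- producing a removable box of charged content strictly greater than $e-s$ from the abacus --- is not established, would still depend on $s$, and does not obviously follow from total $e$-periodicity; nor does your appeal to $c_{\ul{\lambda}}=0$ help here (the paper's proof of this proposition never uses Lemma \ref{lemma ES}). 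So the proposal as written has a genuine gap at its central step.
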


\begin{proof}
We consider the cases in \cite[Corollary 8.5]{Griffeth} one by one. 

Case (a). Converting to the Fock space parameters, the inequalities are:
\begin{align*}&-s\leq \#\{\hbox{columns}(\lambda^1)\}+\#\{\hbox{rows}(\lambda^2)\}-1\leq e-s,\\
&s-e\leq \#\{\hbox{columns}(\lambda^2)\}+\#\{\hbox{rows}(\lambda^1)\}-1\leq s.
\end{align*}
Since both $\#\{\hbox{rows}(\lambda^1)\}\geq 1$ and $\#\{\hbox{rows}(\lambda^2)\}\geq 1$ by the assumption $\lambda^1, \lambda^2\neq \emptyset$, when we add the inequalities we see that $\#\{\hbox{columns}(\ul{\lambda})\}\leq e$. %\hbox{ and }0<s<e.$$ 
Suppose $L(\ul{\lambda})$ is finite-dimensional. Then $\cA(\ul{\lambda})$ is totally $e$-periodic by Theorem \ref{source vx} and \cite[Theorem 5.9]{JaconLecouvey}. By Lemma \ref{>=e col} then $\ul{\lambda}$ has at least $e$ columns. So $\ul{\lambda}$ has exactly $e$ columns, but then by Lemma \ref{e col}, $L(\ul{\lambda})$ cannot be finite-dimensional.

Case (b). If $d+\ell c=\frac{1}{2}$ then $\ell=e-s$. It is required that $\ct(b_2)-\ct(b_4')+1\leq \ell\leq \ct(b_1)-\ct(b_4')+1$, which translates to (ignoring the rightmost term),
$$ \#\{\hbox{col}(\lambda^1)\}-\#\{\hbox{row}(\lambda^1)\hbox{ of size }\lambda_1^1\} + \#\{\hbox{row}(\lambda^2)\}\leq e-s.$$
Additionally, the inequality $-d+(\ct(b_1')-\ct(b_4)+1)c\leq\frac{1}{2}$ must hold, which implies
$$ \#\{\hbox{col}(\lambda^2)\}+\#\{\hbox{row}(\lambda^1)\}-1\leq s.$$
Adding inequalities, we have $$\#\{\hbox{col}(\ul{\lambda})\}\leq \{\hbox{col}(\ul{\lambda})\}+\left(\#\{\hbox{row}(\lambda^1)\}-\#\{\hbox{row}(\lambda^1)\hbox{ of size }\lambda_1^1\}\right) + \left(\#\{\hbox{row}(\lambda^2)-1\right)\leq e.$$ 
Now we conclude as in case (a) that $L(\ul{\lambda})$ is not finite-dimensional.

Case (c). If $-d+\ell c=\frac{1}{2}$ then $\ell=s$. The first inequality is $\ct(b_2')-\ct(b_4)+1\leq \ell\leq \ct(b_1')-\ct(b_4)+1$ which yields (ignoring the rightmost term) the inequality
$$\#\{\hbox{col}(\lambda^2)\}-\#\{\hbox{row}(\lambda^2)\hbox{ of size }\lambda_1^2\} + \#\{\hbox{row}(\lambda^1)\}\leq s .$$
Next, the inequality $d+(\ct(b_1)-\ct(b_4')+1)c\leq \frac{1}{2}$ yields $$\#\{\hbox{col}(\lambda^1)\}+\#\{\hbox{row}(\lambda^2)\}-1\leq e-s.$$ 
Now we add the inequalities and conclude as in case (b) that $L(\ul{\lambda})$ is not finite-dimensional.

Cases (d) and (e). These have $c$ replaced with $-c$ and $\ct(b)$ replaced with $-\ct (b)$ in all conditions, the latter being the same as considering $\lambda^t$ with the original conditions. But to deal with a Fock space with $-c$ instead of $c$ we just take charge $-\ul{s}$ and replace $\ul{\lambda}$ with $\ul{\lambda}^t$ \cite[Section 4.1.4]{Losev}. So cases (d) and (e) reduce to cases (b) and (c). 

Case (f). As in case (b), $d+\ell c=\frac{1}{2}$ implies $\ell=e-s$; and $ -d+mc=\frac{1}{2}$ implies $m=s$. We then have the inequalities:
$$\#\{\hbox{col}(\lambda^1)\}-\#\{\hbox{row}(\lambda^1)\hbox{ of size }\lambda_1^1\} + \#\{\hbox{row}(\lambda^2)\}\leq e-s,$$
$$\#\{\hbox{col}(\lambda^2)\}-\#\{\hbox{row}(\lambda^2)\hbox{ of size }\lambda_1^2\} + \#\{\hbox{row}(\lambda^1)\}\leq s.$$
We add the inequalities and conclude as in case (b).

Case(g). This case actually does not arise in the Fock space set-up: first, our assumption $c=\frac{1}{e}$, $d=-\frac{1}{2}+\frac{s}{e}$ determines $\ell=e-s$ and $m=s$ as in case (f). Then observe that $b_3$ is just ``$b_2$" for $(\lambda^1)^t$, etc, and the inequalities of case (g) if multiplied through by $-1$ become the inequalities of case (f), but for the transpose bipartition $\ul{\lambda}^t=((\lambda^1)^t, (\lambda^2)^t)$. But then we would have $\#\{\hbox{col}(\ul{\lambda}^t)\}\leq -e$, which is nonsense since $e>0$.
\end{proof}

Combining Propositions \ref{rectangle} and \ref{nonempty components}, we conclude that Theorem \ref{unitary+fd} holds.

%  It is interesting to compare the classification given by Theorem \ref{unitary+fd} with a result of Etingof and Montarani: these irreducible representations labeled by rectangles concentrated in a single component arise as deformations of finite-dimensional representations of a wreath product algebra \cite{EtingofMontarani}.

%\begin{question} 
%Consider $G(\ell,1,n)$ instead of $G(2,1,n)=B_n$. Both the classification of unitary representations and the classification of finite-dimensional representations is more complicated and cannot be summarized in a closed formula. Nevertheless it seems natural to ask: are the finite-dimensional unitary representations of $H_c(G(\ell,1,n)$ at parameters $c$ corresponding to parameters $e,(s_1,\dots,s_\ell)$ for the level $\ell$  Fock space labeled by $\ell$-partitions of the form $(\emptyset,\dots,\lambda,\dots,\emptyset)$ where $\lambda$ is a rectangle?
%\end{question}

\textbf{Acknowledgments.} Thanks to S. Griffeth for asking the question about the intersection of the unitary locus of parameters with the finite-dimensional locus, and thanks to C. Bowman, T. Gerber, S. Griffeth, and J. Simental for useful discussions. I would like to especially thank J. Simental for bringing to my attention S. Montarani's work in \cite{Montarani}.

\end{document}